\documentclass[11pt]{amsart}

\usepackage[a4paper,margin=2.5cm]{geometry}
\usepackage{amsmath,amssymb,amsfonts,amsthm}
\usepackage{mathtools}
\usepackage{enumitem}
\usepackage{tikz-cd}
\renewcommand{\le}{\leqslant}
\renewcommand{\ge}{\geqslant}

\newtheorem{theorem}{Theorem}[section]
\newtheorem{lemma}[theorem]{Lemma}

\newtheorem{corollary}[theorem]{Corollary}

\theoremstyle{definition}
\newtheorem{definition}[theorem]{Definition}

\theoremstyle{remark}
\newtheorem{remark}[theorem]{Remark}

\theoremstyle{plain}
\newtheorem*{theoremA}{Theorem A}
\newtheorem*{theoremB}{Theorem B}
\newtheorem*{corollarystar}{Corollary}

\numberwithin{equation}{section}

\newcommand{\R}{\mathbb{R}}
\newcommand{\C}{\mathbb{C}}
\newcommand{\N}{\mathbb{N}}

\newcommand{\id}{\mathrm{Id}}

\newcommand{\BM}{\mathrm{BM}}
\newcommand{\dist}{d_{\BM}}

\begin{document}

\title[{$(\lambda^+)$-injective Banach spaces}]%
{$(\lambda^+)$-injective Banach spaces}

\author[T.~Kania]{Tomasz Kania}
\address[T.~Kania]{Mathematical Institute\\Czech Academy of Sciences\\%
\v{Z}itn\'a 25\\115~67 Praha 1\\Czech Republic
and Institute of Mathematics and Computer Science\\%
Jagiellonian University\\{\L}ojasiewicza 6, 30-348 Krak\'{o}w, Poland}
\email{kania@math.cas.cz, tomasz.marcin.kania@gmail.com}
\thanks{IM CAS (RVO 67985840). }

\author[G.~Lewicki]{Grzegorz Lewicki}
\address[G.~Lewicki]{Institute of Mathematics and Computer Science\\%
Jagiellonian University\\{\L}ojasiewicza 6\\30-348 Krak\'{o}w, Poland}
\email{Grzegorz.Lewicki@im.uj.edu.pl}

\subjclass[2020]{Primary 46B04; Secondary 46B25, 46E15, 46B03}
\keywords{Injective Banach space, projection constant, injectivity
modulus, zero-sum subspace, Banach--Mazur distance, isometrically
square space}

\date{}

\begin{abstract}
In a companion paper (\emph{Studia Math.}, 2023), we proved for every
$\lambda\in(1,2]$ the existence of a $(\lambda^+)$-injective renorming
of $\ell_\infty$ that is not $\lambda$-injective, thereby establishing
a~forgotten theorem of Pe{\l}czy\'nski in that range.
The complementary range $\lambda\in(2,\infty)$ was left open.

In the present paper, we resolve this remaining case: for every
$\lambda>2$ we construct a Banach space that is
$(\lambda^+)$-injective but not $\lambda$-injective, completing
Pe{\l}czy\'nski's theorem for all $\lambda>1$.  The construction uses
a single device: the `zero-sum' subspace
$\Sigma_N(Y)\subset Z_\infty^N$,
which multiplies the relative projection constant by
$\mu_N=2-2/N$ while preserving non-attainment.  Iterating this
operation reduces the problem to the range $(1,2]$ already covered
by the companion paper.  Since the ambient spaces arising in the
iteration are finite $\ell_\infty$-sums of $\ell_\infty$, the
resulting examples may be realised as subspaces of~$\ell_\infty$.

We also prove that if two Banach spaces are each isometrically
isomorphic to their own square and each is isometric to a
$1$-complemented subspace of the other, then their Banach--Mazur
distance is at most $9+6\sqrt{3}$.  Consequently, we obtain the
estimate $\dist(L_\infty[0,1],\ell_\infty)\le 9+6\sqrt{3}$,
thereby improving a recent result of Korpalski and Plebanek.
\end{abstract}

\maketitle

\section{Introduction}

Isbell and Semadeni \cite{IsbellSemadeni} proved that every
infinite-dimensional $1$-injective Banach space contains a hyperplane
that is $(2+\varepsilon)$-injective for every $\varepsilon > 0$, yet is
not $2$-injective, and remarked in a footnote that Pe{\l}czy\'nski had
proved for every $\lambda > 1$ the existence of a
$(\lambda + \varepsilon)$-injective space ($\varepsilon > 0$) that is
not $\lambda$-injective.  Unfortunately, no trace of the proof of
Pe{\l}czy\'nski's result has been preserved.

In our companion paper \cite{KaniaLewicki}, we established that result
for $\lambda\in(1,2]$ by constructing an appropriate renorming of
$\ell_\infty$.  The case $\lambda\in(1,2)$ used hyperplane kernels of
singular functionals on $\ell_\infty$ combined with results of
Baronti--Franchetti \cite{BarontiF} and Blatter--Cheney
\cite{BlatterCheney}; the endpoint $\lambda=2$ was treated via
norm-attainment properties of singular functionals, recovering the
original theorem of Isbell--Semadeni.

At the end of \cite{KaniaLewicki} we remarked that the complementary
range $\lambda\in(2,\infty)$ remained a mystery to us.  The hyperplane
technique is inherently limited to $\lambda\le 2$ because the projection
constant of any hyperplane in a Banach space is at most $2$.  To go
beyond this barrier, one must work with subspaces of higher (but still
finite) codimension.

In the present paper, we develop a different and entirely explicit
approach.  Given a closed subspace $Y$ of a $1$-injective space $Z$
with relative projection constant $\alpha\in(1,2]$ not attained (as
produced by~\cite{KaniaLewicki}), we introduce the
\emph{zero-sum subspace}
\[
\Sigma_N(Y):=\Bigl\{(y_1,\dots,y_N)\in Y_\infty^N:
\sum_{i=1}^N y_i=0\Bigr\}
\subset Z_\infty^N.
\]
The key result (Lemma~\ref{lem:multiplication}) is that
$\lambda(\Sigma_N(Y),Z_\infty^N)=\mu_N\cdot\lambda(Y,Z)$, where
$\mu_N=2-2/N$, and---crucially---non-attainment of the infimum is
preserved.  Since $\mu_N<2$, the factor $\mu_N$ can be iterated: after
$m$ applications the relative projection constant equals
$\mu_N^m\cdot\alpha$, which can be made to match any prescribed
$\lambda>2$ by a suitable choice of $N$ and~$m$.

Background on injective and separably injective spaces can be found
in \cite{AvilesCabelloCastilloGonzalezMoreno}; the characterisation
of $1$-injective spaces as ranges of contractive projections on
$\ell_\infty(\Gamma)$ is due to Kelley \cite{Kelley}.

Combining with the results of \cite{KaniaLewicki}, we obtain the
following complete resolution of Pe{\l}czy\'nski's theorem.

\begin{theoremA}
For every $\lambda>1$ there exists a Banach space that is
$(\lambda^+)$-injective but not $\lambda$-injective.
\end{theoremA}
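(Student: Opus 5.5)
For $\lambda\in(1,2]$ we simply invoke \cite{KaniaLewicki}, so fix $\lambda>2$. The plan is to reduce to the range $(1,2]$ via the zero-sum construction and Lemma~\ref{lem:multiplication}. We use the following link between injectivity and relative projection constants: a closed subspace $Y$ of a $1$-injective space $Z$ is $\lambda$-injective exactly when there is a projection $Z\to Y$ of norm at most $\lambda$. Indeed, if $Y$ is $\lambda$-injective, extend the identity of $Y$ to $Z$. Conversely, if $P\colon Z\to Y$ is a projection, extend an operator $T\colon E\to Y$ (with $E\subset F$) first to $F\to Z$ with the same norm, using $1$-injectivity of $Z$, and then compose with $P$. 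Hence $Y$ is $(\lambda^+)$-injective but not $\lambda$-injective precisely when $\lambda(Y,Z)=\lambda$ and this infimum is not attained.

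\emph{Choosing the parameters.} Since $\mu_N=2-2/N$ increases to $2$, the ratio $\mu_{N+1}/\mu_N$ tends to $1$. We need $N\ge 3$ (so that $\mu_N>1$) and $m\ge1$ with $\alpha:=\lambda/\mu_N^m\in(1,2]$. Take $N$ large enough that $\mu_N>1$, and let $m$ be the least integer with $\lambda/\mu_N^m\le 2$. Then $m\ge1$ because $\lambda>2$, and minimality of $m$ gives $\lambda/\mu_N^{m-1}>2$. Consequently $\alpha=\lambda/\mu_N^m>2/\mu_N>1$, so $\alpha\in(1,2]$ and no further tuning of $N$ is needed.

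\emph{Base step and iteration.} By \cite{KaniaLewicki} there is a $1$-injective space $Z$ (a renorming of, or isometric to, $\ell_\infty$) and a closed subspace $Y\subset Z$ with $\lambda(Y,Z)=\alpha$ not attained. This is the form used in that paper: the kernel of a suitable functional in $\ell_\infty$, for $\alpha<2$ and for $\alpha=2$. We set $Y_0=Y$ and $Z_0=Z$, and define $Y_{k+1}=\Sigma_N(Y_k)\subset Z_{k+1}=(Z_k)^N_\infty$. Each $Z_k$ is again $1$-injective, being a finite $\ell_\infty$-sum of $1$-injective spaces. Lemma~\ref{lem:multiplication}, applied repeatedly (and by induction on $k$), shows that $\lambda(Y_k,Z_k)=\mu_N^k\alpha$ and that this value is not attained. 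For $k=m$ we get $\lambda(Y_m,Z_m)=\lambda$ with non-attainment, and the equivalence of the first paragraph then finishes the proof.

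\emph{Main obstacle.} The substantive work lies in Lemma~\ref{lem:multiplication}, which we take as given. Its difficulty is the preservation of non-attainment. Averaging over the cyclic symmetry of the $N$ coordinates should turn any projection onto $\Sigma_N(Y)$ into one built from a projection $Y\to Z$, with norm exactly $\mu_N$ times its norm. So an attaining projection for $\Sigma_N(Y)$ would yield an attaining one for $Y$. Within the present argument, the only point needing care is confirming that the base example from \cite{KaniaLewicki} has exactly the form this lemma requires: a non-attained relative constant in a $1$-injective ambient space, rather than merely the abstract statement that the space is $(\alpha^+)$-injective but not $\alpha$-injective. If the renorming is not presented as sitting inside a $1$-injective space, we would embed it isometrically into $\ell_\infty(\Gamma)$ and use the equivalence from the first paragraph to transfer non-attainment.
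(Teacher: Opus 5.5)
Your proof is correct and follows the paper's argument. You invoke \cite{KaniaLewicki} for $\lambda\in(1,2]$; for $\lambda>2$ you start from its non-attained subspace of $\ell_\infty$, iterate $Y\mapsto\Sigma_N(Y)$ via Lemma~\ref{lem:multiplication} inside the $1$-injective spaces $(\ell_\infty)^{N^k}_\infty$, and conclude with (an iff version of) Lemma~\ref{lem:from-proj-to-inj}. Your parameter choice (any fixed $N\ge 3$, with $m$ minimal such that $\lambda/\mu_N^m\le 2$) is a valid variant of the paper's; in your heuristic remark the projection should go $Z\to Y$, and the paper averages over the full symmetric group rather than only the cyclic shifts.
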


The case $\lambda=1$ is special: over $\R$, Lindenstrauss
\cite{Lindenstrauss} proved that every $(1^+)$-injective real Banach
space is $1$-injective.  We do not discuss the corresponding
complex-scalar problem here that remains open to date.

Our second result concerns Banach--Mazur distances.
Recall that for isomorphic Banach spaces $X$ and $Y$ the
\emph{Banach--Mazur distance} is
\[
\dist(X,Y)
:=\inf\{\|T\|\,\|T^{-1}\|: T\colon X\to Y\text{ is an isomorphism}\}.
\]
A Banach space $X$ is \emph{isometrically square} if
$X\cong X\oplus_\infty X$ via a surjective linear isometry.

\begin{theoremB}
Let $X$ and $Y$ be Banach spaces such that
\begin{enumerate}[label=\textup{(\roman*)}]
\item $X$ is isometric to a $1$-complemented subspace of $Y$ and
$Y$ is isometric to a $1$-complemented subspace of $X$;
\item $X$ and $Y$ are both isometrically square.
\end{enumerate}
Then $\dist(X,Y)\le 9+6\sqrt{3}$.
\end{theoremB}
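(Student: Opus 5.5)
The plan is to run the Pełczyński decomposition method while tracking constants exactly, and to use weighted $\ell_\infty$-sums so that the free weights can be optimised at the end.

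\textbf{Step 1: a one-sided decomposition with explicit constants.} Let $P\colon X\to X$ be a contractive projection whose range $P(X)$ is isometric to $Y$, and put $A:=\ker P=(\id-P)(X)$. Then $\|\id-P\|\le 2$. For a weight $t>0$, equip $Y\oplus A$ with the norm $\|(y,a)\|_t:=\max(\|y\|,t\|a\|)$. Consider
\[
S\colon X\to Y\oplus_t A,\qquad Sx=(Px,(\id-P)x).
\]
Then $\|S\|\le\max(1,2t)$. Its inverse is $(y,a)\mapsto y+a$, with $\|S^{-1}\|\le 1+1/t$. Symmetrically, a contractive projection $Q$ on $Y$ whose range is isometric to $X$ gives $B:=\ker Q$ and an analogous isomorphism $Y\to X\oplus_s B$, with the same kind of bounds in a weight $s$.

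\textbf{Step 2: absorbing a square.} Using the isometry $Y\cong Y\oplus_\infty Y$, we have isometrically
\[
Y\oplus_t A\cong (Y\oplus_\infty Y)\oplus_t A = Y\oplus_\infty (Y\oplus_t A).
\]
The point is that the regrouping preserves the weighted max-norm exactly. Applying $S^{-1}$ only in the second summand identifies $X$ with $Y\oplus_\infty X$. I will not simply multiply the distortions of $S$ and $\id_Y\oplus S^{-1}$. Instead I intend to write down the composite operator $X\to Y\oplus_\infty X$ explicitly, as $x\mapsto (\pi_1 Px,\; \pi_2Px+(\id-P)x)$ with $\pi_1,\pi_2$ the coordinate maps of $Y\cong Y\oplus_\infty Y$. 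I will then estimate its norm and the norm of its inverse directly, since this is where genuine cancellation happens.

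The same argument with $Q$ and $X\cong X\oplus_\infty X$ gives an isomorphism $Y\to X\oplus_\infty Y$. Composing the first isomorphism with the inverse of the second (after the flip $Y\oplus_\infty X\cong X\oplus_\infty Y$) then bounds $\dist(X,Y)$ by a product of the two bounds, as a function of the weights $t$ and $s$.

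\textbf{Step 3: optimisation, the main obstacle.} The naive bookkeeping is $\max(1,2t)(1+1/t)$, minimised at $t=1/2$ with value $3$, applied twice per side. This yields a far worse constant than claimed. So the real work is to find the best explicit operator and weights. I expect to introduce an additional weight in the $\oplus_\infty$ splitting of $Y\oplus_\infty X$, that is, to use the isometric square structure with scaled copies. I would then compute the norms of the resulting operator and its inverse as functions of two parameters.

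The target $9+6\sqrt3=3(3+2\sqrt3)$ suggests that the optimum comes from a quadratic balancing condition whose root involves $\sqrt3$, for instance a one-sided factor of the form $1+\sqrt3$ or $(3+2\sqrt3)/\ldots$ combined with a factor $3$. I would first try to minimise the two-parameter expression by balancing the two branches of each maximum. I would then check that the minimiser gives exactly $9+6\sqrt3$. If it does not, I would revisit which operator (forward or inverse) is allowed to absorb the factor $2$ coming from $\|\id-P\|$.
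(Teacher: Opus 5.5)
You never actually reach $9+6\sqrt3$. Step~3 is a search plan rather than an argument. More importantly, the route you commit to at the end of Step~2 cannot give that constant: you bound $\dist(X,Y)$ by the \emph{product} of the distortions of $\Phi\colon X\to Y\oplus_\infty X$ and $\Psi\colon Y\to X\oplus_\infty Y$.

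First, the weight $t$ drops out as soon as you apply $S^{-1}$. The composite is $x\mapsto(\pi_1Px,\pi_2Px+(\id-P)x)$ for every $t$, with distortion bound $3\cdot 3=9$. Now insert arbitrary weights, $\Phi x=(\alpha\pi_1Px,\ \beta\pi_2Px+\gamma(\id-P)x)$. The general estimates are $\|\Phi\|\le\max\{\alpha,\beta+2\gamma\}$ and $\|\Phi^{-1}\|\le\max\{\alpha^{-1},\beta^{-1}\}+2\gamma^{-1}$, and
\[
\max\{\alpha,\beta+2\gamma\}\bigl(\max\{\alpha^{-1},\beta^{-1}\}+2\gamma^{-1}\bigr)
\ge(\beta+2\gamma)\bigl(\beta^{-1}+2\gamma^{-1}\bigr)
=5+2\Bigl(\frac{\beta}{\gamma}+\frac{\gamma}{\beta}\Bigr)\ge 9 .
\]
So each one-sided factor is at least $9$, and the product is at least $81$, whatever weights you insert.

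The missing idea is to estimate the composite $W:=\Psi^{-1}\circ\tau\circ\Phi\colon X\to Y$ ($\tau$ the flip) and its inverse directly. Write $\Psi y=(\alpha'\rho_1Qy,\ \beta'\rho_2Qy+\gamma'(\id-Q)y)$, where $\rho=(\rho_1,\rho_2)$ is the square isometry of $X$ transported to $Q(Y)$ (identifying $X$ with $Q(Y)$ as in Step~2). Then $\Psi^{-1}(x,y')=\rho^{-1}(x/\alpha',\,Qy'/\beta')+(\id-Q)y'/\gamma'$, and $\rho^{-1}$ is an isometry on an $\ell_\infty$-sum. Hence
\[
Wx=\rho^{-1}\Bigl(\frac{\beta\pi_2Px+\gamma(\id-P)x}{\alpha'},\ \frac{\alpha}{\beta'}Q\pi_1Px\Bigr)+\frac{\alpha}{\gamma'}(\id-Q)\pi_1Px,
\]
so that
\[
\|W\|\le\max\Bigl\{\frac{\beta+2\gamma}{\alpha'},\frac{\alpha}{\beta'}\Bigr\}+\frac{2\alpha}{\gamma'},
\]
and symmetrically for $W^{-1}$ with primed and unprimed weights exchanged. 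The gain is that each weight of $\Phi$ meets only its own partner weight of $\Psi^{-1}$ inside a maximum, instead of the worst one.

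Choose $\alpha=\alpha'=1$, $\beta=\beta'=1/s$, $\gamma=\gamma'=a/s$ with $s=\sqrt{2a+1}$. Then $\|W\|,\|W^{-1}\|\le s+2s/a=(a+2)\sqrt{2a+1}/a$, so $\dist(X,Y)\le 2a+9+12/a+4/a^2$. This is minimised at $a=1+\sqrt3$, with value $9+6\sqrt3$. With these weights $W$ is exactly the paper's $W_a=U_aST_a$, routed through $Y\oplus_\infty Y\oplus_\infty\ker P$ and $X\oplus_\infty X\oplus_\infty\ker Q$ (up to the order of the two copies of $X$). Your framework is the right one, but multiplying the two distortions throws away exactly the factor $2a+1$ you need.
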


\begin{corollarystar}
$\dist(L_\infty[0,1],\ell_\infty)\le 9+6\sqrt{3}\approx 19.39$.
\end{corollarystar}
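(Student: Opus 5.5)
The plan is to apply Theorem~B with $X=L_\infty[0,1]$ and $Y=\ell_\infty$, so the work is to check hypotheses (i) and (ii) for these two spaces.

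\textbf{Hypothesis (ii).} For $\ell_\infty$, identify $\ell_\infty\oplus_\infty\ell_\infty$ with $\ell_\infty(\N\sqcup\N)$. A bijection $\N\sqcup\N\to\N$ then induces a surjective linear isometry onto $\ell_\infty$.

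For $L_\infty[0,1]$, restriction gives an isometric identification
\[
L_\infty[0,1]=L_\infty[0,\tfrac12]\oplus_\infty L_\infty[\tfrac12,1].
\]
This works because $\|f\|_\infty=\max\bigl(\|f|_{[0,1/2]}\|_\infty,\|f|_{[1/2,1]}\|_\infty\bigr)$. Each summand is isometric to $L_\infty[0,1]$ via composition with an affine bijection, for instance $f\mapsto f(\tfrac{\cdot}{2})$ and $f\mapsto f(\tfrac{1+\cdot}{2})$. Affine maps preserve null sets in both directions, so essential suprema are unchanged.

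\textbf{Hypothesis (i), $\ell_\infty$ inside $L_\infty[0,1]$.} Partition $[0,1]$ into intervals $I_n$ of positive length, e.g.\ $I_n=(2^{-n},2^{-n+1}]$. The map $J(a_n)=\sum_n a_n\mathbf 1_{I_n}$ is a linear isometry of $\ell_\infty$ into $L_\infty[0,1]$. The averaging operator
\[
Pf=\sum_n\Bigl(\tfrac{1}{|I_n|}\int_{I_n}f\Bigr)\mathbf 1_{I_n}
\]
is a conditional expectation, hence has norm one, and it projects onto $J(\ell_\infty)$.

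\textbf{Hypothesis (i), $L_\infty[0,1]$ inside $\ell_\infty$.} Since $L_1[0,1]$ is separable, choose a norm-dense sequence $(g_n)$ in its unit sphere. Set $Tf=(\int_0^1 fg_n)_n$. Because $L_\infty=L_1^*$, the map $T$ is a linear isometry of $L_\infty[0,1]$ into $\ell_\infty$.

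It remains to get a norm-one projection of $\ell_\infty$ onto $T(L_\infty[0,1])$. This is the one step that is not a direct computation, and I would obtain it from $1$-injectivity. $L_\infty[0,1]$ is a Dedekind complete AM-space with unit, equivalently $C(K)$ with $K$ extremally disconnected. By the Nachbin--Goodner--Kelley theorem \cite{Kelley}, it is therefore $1$-injective. Hence the identity map $T(L_\infty)\to T(L_\infty)$ extends to a norm-one operator $\ell_\infty\to T(L_\infty)$, which is the required contractive projection.

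With (i) and (ii) in hand, Theorem~B gives $\dist(L_\infty[0,1],\ell_\infty)\le 9+6\sqrt3$.
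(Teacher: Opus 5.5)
Your proposal is correct and follows the same route as the paper: you verify hypotheses (i) and (ii) of Theorem~B for $L_\infty[0,1]$ and $\ell_\infty$, embedding $\ell_\infty$ via disjoint intervals and $L_\infty[0,1]$ via a norming sequence, and you use $1$-injectivity of $L_\infty[0,1]$ to get the contractive projection onto the latter copy (your dense sequence in the unit sphere of $L_1[0,1]$ is a concrete witness of the weak$^*$-separability of the dual ball that the paper invokes). The only difference is that you write out details the paper delegates to Korpalski--Plebanek, such as the halving isometries and the explicit averaging projection onto $J(\ell_\infty)$, which removes the need for $1$-injectivity in that direction.
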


Since $(3+\sqrt{2})^2=11+6\sqrt{2}\approx 19.49 > 9+6\sqrt{3}$, this
improves the explicit upper bound of Korpalski and
Plebanek~\cite{KorpalskiPlebanek}.

\subsection*{Organisation}

Unless explicitly stated otherwise, Banach spaces are considered over a
fixed scalar field $\mathbb K\in\{\R,\C\}$.
Sections~\ref{sec:prelim}--\ref{sec:main_proof} develop the proof of
Theorem~A.  Section~\ref{sec:optimised} proves Theorem~B.


\section{Preliminaries: injectivity and projection constants}%
\label{sec:prelim}

We write
\[
\mathcal P(F,E):=\{P\colon F\to E : P|_E=\id_E\}
\]
for the set of bounded projections from a Banach space $F$ onto a closed
subspace $E\subset F$, and
\[
\lambda(E,F):=\inf\{\|P\|: P\in\mathcal P(F,E)\}
\]
for the \emph{relative projection constant} of $E$ in~$F$.

\begin{definition}\label{def:injective}
Let $X$ be a Banach space and $\lambda\ge 1$.
\begin{enumerate}[label=(\roman*)]
\item $X$ is \emph{$\lambda$-injective} if for every Banach space $Z$,
every subspace $E\subset Z$, and every bounded operator $T:E\to X$,
there exists an extension $\widetilde T:Z\to X$ with
$\|\widetilde T\|\le \lambda\|T\|$.
\item $X$ is \emph{$(\lambda^+)$-injective} if it is
$(\lambda+\varepsilon)$-injective for all $\varepsilon>0$.
\end{enumerate}
\end{definition}

\begin{figure}[t]
\[
\begin{tikzcd}[column sep=large,row sep=large]
E \arrow[r, hook, "\iota"] \arrow[dr, "T"'] &
Z \arrow[d, dashed, "\widetilde T"] \\
& X
\end{tikzcd}
\qquad
\|\widetilde T\|\le \lambda\|T\|.
\]
\caption{The extension property defining $\lambda$-injectivity:
every bounded operator $T\colon E\to X$ defined on a subspace
$E\subset Z$ extends across the inclusion $\iota$ with norm loss at
most the factor $\lambda$.}
\label{fig:lambda-injectivity}
\end{figure}

\begin{lemma}\label{lem:sum-injective}
Let $X_1,\dots,X_n$ be $1$-injective Banach spaces.  Then the finite
$\ell_\infty$-sum $X_1\oplus_\infty\cdots\oplus_\infty X_n$ is
$1$-injective.
\end{lemma}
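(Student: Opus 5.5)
The proof is a coordinatewise extension. Set $X:=X_1\oplus_\infty\cdots\oplus_\infty X_n$. Write $\pi_k\colon X\to X_k$ for the coordinate projections and $j_k\colon X_k\to X$ for the canonical embeddings. The norm of $X$ satisfies $\|x\|=\max_k\|\pi_k x\|$, and each $\pi_k$ is contractive.

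Fix a Banach space $Z$, a subspace $E\subset Z$, and a bounded operator $T\colon E\to X$.

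\textbf{Step 1.} For each $k$ consider the compression $T_k:=\pi_k T\colon E\to X_k$. Then $\|T_k\|\le\|T\|$.

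\textbf{Step 2.} Since $X_k$ is $1$-injective (Definition~\ref{def:injective} with $\lambda=1$), there is an extension $\widetilde T_k\colon Z\to X_k$ with $\widetilde T_k|_E=T_k$ and $\|\widetilde T_k\|\le\|T_k\|\le\|T\|$.

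\textbf{Step 3.} Define $\widetilde T\colon Z\to X$ by
\[
\widetilde T z:=\sum_{k=1}^n j_k\widetilde T_k z=(\widetilde T_1z,\dots,\widetilde T_nz).
\]
This operator is linear and bounded. For $e\in E$ we have $\pi_k\widetilde T e=T_ke=\pi_kTe$ for every $k$, so $\widetilde T e=Te$; thus $\widetilde T$ extends $T$.

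\textbf{Step 4.} The $\ell_\infty$-norm gives
\[
\|\widetilde Tz\|=\max_k\|\widetilde T_kz\|\le\max_k\|\widetilde T_k\|\,\|z\|\le\|T\|\,\|z\|,
\]
so $\|\widetilde T\|\le\|T\|$, which is exactly $1$-injectivity of $X$.

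There is no real obstacle. The one point to check is that the finite $\ell_\infty$-sum norm is a maximum of coordinate norms. This is what makes the norm bound in Step 4 hold with constant $1$, and it would fail for other sums such as $\ell_1$-sums.
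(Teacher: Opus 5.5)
Your proof is correct and follows the paper's argument. The paper likewise extends each coordinate $T_j$ using $1$-injectivity of $X_j$, sets $\widetilde T=(\widetilde T_1,\dots,\widetilde T_n)$, and reads off $\|\widetilde T\|=\max_j\|\widetilde T_j\|\le\|T\|$ from the $\ell_\infty$-norm.
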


\begin{proof}
Let $E\subset F$ be Banach spaces and let
$T=(T_1,\dots,T_n)\colon E\to X_1\oplus_\infty\cdots\oplus_\infty X_n$
be bounded.  Since each $X_j$ is $1$-injective, there exists an
extension $\widetilde T_j\colon F\to X_j$ of $T_j$ with
$\|\widetilde T_j\|=\|T_j\|\le\|T\|$.  Define
$\widetilde T:=(\widetilde T_1,\dots,\widetilde T_n)$.
Then $\widetilde T$ extends $T$ and
$\|\widetilde T\|=\max_j\|\widetilde T_j\|\le\|T\|$.
\end{proof}

\begin{lemma}\label{lem:from-proj-to-inj}
Let $Z$ be a $1$-injective Banach space and let $Y\subset Z$ be a closed
subspace with $a:=\lambda(Y,Z)<\infty$.  If the infimum defining $\lambda(Y,Z)$
is not attained, then $Y$ is $(a^+)$-injective but not $a$-injective.
\end{lemma}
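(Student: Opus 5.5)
The proof has two parts: the positive statement, that $Y$ is $(a+\varepsilon)$-injective for every $\varepsilon>0$, and the negative statement, that $Y$ is not $a$-injective. Both will come from comparing extension problems into $Y$ with projections $Z\to Y$, using the $1$-injectivity of $Z$.

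For the positive part, fix $\varepsilon>0$. Since $a=\lambda(Y,Z)$ is an infimum, there is $P\in\mathcal P(Z,Y)$ with $\|P\|\le a+\varepsilon$. Let $E\subset F$ be Banach spaces and let $T\colon E\to Y$ be bounded. We regard $T$ as an operator into $Z$. Because $Z$ is $1$-injective, it extends to $S\colon F\to Z$ with $\|S\|=\|T\|$. Set $\widetilde T:=PS\colon F\to Y$. For $e\in E$ we have $Se=Te\in Y$, so $PSe=Te$; hence $\widetilde T$ extends $T$, and $\|\widetilde T\|\le(a+\varepsilon)\|T\|$. Thus $Y$ is $(a+\varepsilon)$-injective for every $\varepsilon>0$, i.e. $(a^+)$-injective.

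For the negative part, argue by contradiction. Suppose $Y$ is $a$-injective. Apply the definition with the ambient space $Z$, the subspace $E=Y$, and $T=\id_Y$, which has norm $1$ (we may assume $Y\neq\{0\}$; otherwise the zero projection attains the infimum). This produces an extension $P\colon Z\to Y$ of $\id_Y$ with $\|P\|\le a$. Then $P\in\mathcal P(Z,Y)$, so $\|P\|\ge\lambda(Y,Z)=a$, and therefore $\|P\|=a$. So the infimum is attained, contradicting the hypothesis.

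There is no real obstacle here. The only points needing care are that the extension $\widetilde T=PS$ indeed lands in $Y$ and restricts to $T$ on $E$, and that the degenerate case $Y=\{0\}$ is excluded by the non-attainment hypothesis.
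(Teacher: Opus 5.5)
Your proposal is correct and follows the paper's proof essentially verbatim. For $(a^+)$-injectivity you extend $T$ into $Z$ by $1$-injectivity and compose with a near-optimal projection; for the negative part you apply $a$-injectivity to $\id_Y$ on $Y\subset Z$ to get a norm-$a$ projection, contradicting non-attainment. Your remark about the degenerate case $Y=\{0\}$ is a small extra care point that the paper leaves implicit.
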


\begin{proof}
Fix $\varepsilon>0$.  By definition of $a$, there exists
$P_\varepsilon\in\mathcal P(Z,Y)$ with
$\|P_\varepsilon\|<a+\varepsilon$.  Let $E\subset F$ be Banach spaces
and let $T\colon E\to Y$ be bounded.  Since $Z$ is $1$-injective, there
exists $\widetilde T\colon F\to Z$ extending $T$ with
$\|\widetilde T\|=\|T\|$.  Then
$\widehat T:=P_\varepsilon\widetilde T\colon F\to Y$ extends $T$
(because $P_\varepsilon|_Y=\id_Y$) and
$\|\widehat T\|\le\|P_\varepsilon\|\,\|T\|<(a+\varepsilon)\|T\|$.
Since $\varepsilon>0$ was arbitrary, $Y$ is $(a^+)$-injective.

If $Y$ were $a$-injective, applying the definition to the pair
$Y\subset Z$ and the identity $\id_Y\colon Y\to Y$ would yield
$P\in\mathcal P(Z,Y)$ with $\|P\|\le a$, contradicting non-attainment.
\end{proof}

\section{The zero-sum construction}\label{sec:zerosum}

For a non-zero Banach space $X$ and an integer $n\ge 2$, we write
$X_\infty^n:=X^n$ equipped with the norm
$\|(x_1,\dots,x_n)\|_\infty:=\max_{1\le i\le n}\|x_i\|$
and define the \emph{zero-sum subspace}
\[
\Sigma_n(X):=\Bigl\{(x_1,\dots,x_n)\in X_\infty^n :
\sum_{i=1}^n x_i=0\Bigr\}.
\]
We also define the \emph{centring projection}
\[
S_n^X(x_1,\dots,x_n)
:=\Bigl(x_i-\frac{1}{n}\sum_{j=1}^n x_j\Bigr)_{i=1}^n
\]
and put $\mu_n:=2-2/n$.

\begin{lemma}\label{lem:Sn}
Let $X\neq\{0\}$ be a Banach space and let $n\ge 2$.  Then $S_n^X$ is
a projection from $X_\infty^n$ onto $\Sigma_n(X)$ and
$\|S_n^X\|=\mu_n$.
\end{lemma}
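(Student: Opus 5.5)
The proof splits into three parts: that $S_n^X$ is a projection onto $\Sigma_n(X)$, an upper bound on its norm, and a matching lower bound.

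\emph{Projection property.} For any $x=(x_1,\dots,x_n)$, write $m:=\frac1n\sum_j x_j$. The coordinates of $S_n^X x$ sum to $\sum_i x_i - n m = 0$, so $S_n^X$ maps $X_\infty^n$ into $\Sigma_n(X)$. If $x\in\Sigma_n(X)$, then $m=0$ and hence $S_n^X x = x$. Linearity and boundedness are clear, so $S_n^X$ is a bounded projection onto $\Sigma_n(X)$.

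\emph{Upper bound.} Fix $i$ and write the $i$-th coordinate as
\[
x_i - m = \Bigl(1-\frac1n\Bigr)x_i - \frac1n\sum_{j\ne i}x_j .
\]
The triangle inequality gives
\[
\|x_i - m\| \le \Bigl(1-\frac1n\Bigr)\|x\|_\infty + \frac{n-1}{n}\|x\|_\infty = \mu_n\|x\|_\infty .
\]
Taking the maximum over $i$ yields $\|S_n^X\|\le\mu_n$.

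\emph{Lower bound.} This is the only step needing a choice, namely a test vector that makes the triangle inequality an equality. Since $X\neq\{0\}$, pick $u\in X$ with $\|u\|=1$ and set $x:=(u,-u,\dots,-u)$. Then $\|x\|_\infty=1$ and $m=\frac{1-(n-1)}{n}u=\frac{2-n}{n}u$. The first coordinate of $S_n^X x$ is
\[
u - \frac{2-n}{n}u = \frac{2n-2}{n}u = \mu_n u ,
\]
which has norm $\mu_n$. Hence $\|S_n^X\|\ge\mu_n$, and combined with the upper bound, $\|S_n^X\|=\mu_n$.

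I do not expect a genuine obstacle: the argument is elementary, and the main point is simply to choose the extremal vector with one coordinate of opposite sign to the others.
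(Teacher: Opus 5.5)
Your proposal is correct and follows the paper's proof essentially line for line. You check the zero coordinate sum and that $\Sigma_n(X)$ is fixed, apply the triangle inequality to $\frac{n-1}{n}x_i-\frac1n\sum_{j\ne i}x_j$, and use the extremal vector $(u,-u,\dots,-u)$ for the lower bound.
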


\begin{proof}
The sum of the coordinates of $S_n^Xx$ equals
$\sum_i x_i-\sum_j x_j=0$, so $S_n^Xx\in\Sigma_n(X)$.  If
$x\in\Sigma_n(X)$, then $\sum_j x_j=0$, hence $S_n^Xx=x$.  Therefore
$S_n^X$ is a projection onto~$\Sigma_n(X)$.

For the upper bound, let $M:=\max_i\|x_i\|$.  Then, for each $i$,
\begin{align*}
\Bigl\|x_i-\frac{1}{n}\sum_{j=1}^n x_j\Bigr\|
&=\Bigl\|\frac{n-1}{n}x_i-\frac{1}{n}\sum_{j\ne i}x_j\Bigr\|\\
&\le \frac{n-1}{n}\|x_i\|+\frac{1}{n}\sum_{j\ne i}\|x_j\|
\le \frac{n-1}{n}M+\frac{n-1}{n}M
=\mu_n M.
\end{align*}
Taking the maximum over $i$ gives $\|S_n^Xx\|_\infty\le\mu_n\|x\|_\infty$.

For the reverse inequality, pick $u\in X$ with $\|u\|=1$ and set
$x:=(u,-u,\dots,-u)\in X_\infty^n$.  Then $\|x\|_\infty=1$ and
$\sum_j x_j=(2-n)u$.  The first coordinate of $S_n^Xx$ equals
$u-(2-n)u/n=\mu_n u$, and each remaining coordinate equals
$-u-(2-n)u/n=-2u/n$.  Hence
$\|S_n^Xx\|_\infty=\mu_n$, giving $\|S_n^X\|\ge\mu_n$.
\end{proof}

The following lemma is the core mechanism of the paper.

\begin{lemma}
\label{lem:multiplication}
Let $Y\subset Z$ be Banach spaces with
$a:=\lambda(Y,Z)<\infty$, and assume that the infimum defining $\lambda(Y,Z)$
is not attained.  Fix $n\ge 2$ and put
$W:=\Sigma_n(Y)\subset Z_\infty^n$.  Then
\[
\lambda(W,Z_\infty^n)=\mu_n\,a,
\]
and the infimum defining $\lambda(W,Z_\infty^n)$ is not attained.
\end{lemma}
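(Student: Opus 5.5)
The plan is to prove the two inequalities $\lambda(W,Z_\infty^n)\le\mu_n a$ and $\lambda(W,Z_\infty^n)\ge\mu_n a$ separately. For the lower bound I will turn an arbitrary projection onto $W$ into a projection onto $Y$ whose norm is smaller by exactly the factor $\mu_n$. That transfer step is also what gives non-attainment.

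\emph{Upper bound.} Given $P\in\mathcal P(Z,Y)$, put $Q:=S_n^Y\circ(P\oplus\cdots\oplus P)$. Here $P\oplus\cdots\oplus P$ acts coordinatewise from $Z_\infty^n$ to $Y_\infty^n$ and has norm $\|P\|$. Then $Q$ takes values in $\Sigma_n(Y)=W$ by Lemma~\ref{lem:Sn}. It fixes $W$, because on $W$ the coordinatewise map is the identity and $S_n^Y$ fixes $\Sigma_n(Y)$. Also $\|Q\|\le\mu_n\|P\|$. Taking the infimum over $P$ gives $\lambda(W,Z_\infty^n)\le\mu_n a$.

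\emph{Lower bound via symmetrisation.} Let $Q\in\mathcal P(Z_\infty^n,W)$ with $\|Q\|=q$. For a permutation $\sigma$ of $\{1,\dots,n\}$, let $R_\sigma$ permute coordinates; it is an isometry of $Z_\infty^n$ and leaves $W$ invariant. Define
\[
Q':=\frac1{n!}\sum_\sigma R_\sigma^{-1}QR_\sigma .
\]
Then $Q'$ is still a projection onto $W$, satisfies $\|Q'\|\le q$, and commutes with every $R_\sigma$.

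Write $\iota_i z$ for $z$ placed in the $i$-th coordinate. By invariance under the permutations fixing $1$, we get $Q'(\iota_1 z)=(Az,Bz,\dots,Bz)$ for linear maps $A,B\colon Z\to Y$. Since the coordinates sum to zero, $Bz=-Az/(n-1)$. By symmetry, $Q'(\iota_i z)$ has $Az$ in slot $i$ and $Bz$ elsewhere.

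Next consider a diagonal vector $(d,\dots,d)$ with $d\in Y$. Its image $Q'(d,\dots,d)$ is a permutation-invariant element of $W$, so all its coordinates are equal and sum to zero, hence it is $0$. For $y\in Y$ we decompose
\[
\iota_1 y=S_n^Y(\iota_1 y)+(y/n,\dots,y/n).
\]
The first summand lies in $W$ and is fixed, and the second is killed, so $Ay=\frac{n-1}{n}y$. Thus $P:=\frac{n}{n-1}A\in\mathcal P(Z,Y)$.

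\emph{Norm estimate.} Test $Q'$ on $(z,-z,\dots,-z)$, which has norm $\|z\|$. Its first coordinate is $Az-(n-1)Bz=2Az$, so $2\|Az\|\le q\|z\|$. Hence
\[
\|P\|\le\frac{n}{n-1}\cdot\frac q2=\frac{q}{\mu_n}.
\]
Therefore $a\le q/\mu_n$ for every such $Q$, which gives the lower bound.

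\emph{Non-attainment.} If some $Q$ had $\|Q\|=\mu_n a$, the construction above would give $P\in\mathcal P(Z,Y)$ with $\|P\|\le a$. This contradicts the assumed non-attainment for $\lambda(Y,Z)$.

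The main obstacle is the lower bound, specifically extracting a genuine projection onto $Y$ from $Q$. The symmetrisation step, together with the observation that diagonal vectors are annihilated, is what makes $A$ restricted to $Y$ a known multiple of the identity. The norm computation is then a single well-chosen test vector.
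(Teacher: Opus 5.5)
Your proof is correct and follows the paper's argument: the same upper bound via $S_n^Y$ composed with the coordinatewise extension of a near-optimal projection, the same symmetrisation giving $(Az,Bz,\dots,Bz)$, the same extracted projection $\frac{n}{n-1}A=A-B$ onto $Y$, and the same test vector $(z,-z,\dots,-z)$. The differences are only cosmetic. You identify $A|_Y$ by showing that $Q'$ kills diagonal vectors, where the paper instead applies $\widetilde P$ to $e_1(y)-e_2(y)\in W$. You also use only the one-sided estimate $\|Q'\|\ge\mu_n\|P\|$, rather than first establishing the exact factorisation $\widetilde P=\widehat R\circ S_n^Z$; that estimate already suffices for both the lower bound and non-attainment.
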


\begin{proof}
\emph{Upper bound.}
Fix $\varepsilon>0$ and choose $Q_\varepsilon\in\mathcal P(Z,Y)$ with
$\|Q_\varepsilon\|<a+\varepsilon$.  Define the coordinatewise operator
$\widehat Q_\varepsilon\colon Z_\infty^n\to Y_\infty^n$,
$\widehat Q_\varepsilon(z_1,\dots,z_n):=
(Q_\varepsilon z_1,\dots,Q_\varepsilon z_n)$,
so that $\|\widehat Q_\varepsilon\|=\|Q_\varepsilon\|$.  Set
$P_\varepsilon:=S_n^Y\circ\widehat Q_\varepsilon\colon Z_\infty^n\to W$.
Since $S_n^Y$ projects onto $\Sigma_n(Y)=W$, the operator
$P_\varepsilon$ maps into~$W$.  If $w\in W$, then $w\in Y_\infty^n$,
$\widehat Q_\varepsilon w=w$, and $S_n^Yw=w$; hence
$P_\varepsilon\in\mathcal P(Z_\infty^n,W)$.  By Lemma~\ref{lem:Sn},
\[
\|P_\varepsilon\|
\le \|S_n^Y\|\,\|\widehat Q_\varepsilon\|
=\mu_n\|Q_\varepsilon\|
<\mu_n(a+\varepsilon).
\]
Since $\varepsilon>0$ is arbitrary, $\lambda(W,Z_\infty^n)\le\mu_n a$.

\smallskip
\emph{Lower bound and non-attainment.}
Let $P\in\mathcal P(Z_\infty^n,W)$ be arbitrary.  For a permutation
$\sigma\in\mathfrak S_n$, let
$U_\sigma\colon Z_\infty^n\to Z_\infty^n$ be the coordinate permutation
$U_\sigma(z_1,\dots,z_n):=(z_{\sigma^{-1}(1)},\dots,z_{\sigma^{-1}(n)})$.
Each $U_\sigma$ is a surjective linear isometry with $U_\sigma(W)=W$.
Define the symmetrisation
\[
\widetilde P:=\frac{1}{n!}\sum_{\sigma\in\mathfrak S_n}
U_\sigma^{-1}\,P\,U_\sigma.
\]
Then $\widetilde P\in\mathcal P(Z_\infty^n,W)$,
$\|\widetilde P\|\le\|P\|$, and $\widetilde P$ commutes with every
coordinate permutation.

For $z\in Z$ and $1\le j\le n$, let $e_j(z)\in Z_\infty^n$ denote the
vector with $z$ in the $j$-th coordinate and zero elsewhere.  Define
operators $A,B\colon Z\to Y$ by declaring
$\widetilde P(e_1(z))=(Az,Bz,\dots,Bz)$ for $z\in Z$.
This is possible because $\widetilde P$ is invariant under permutations
that fix the first coordinate, so coordinates $2,\dots,n$ of
$\widetilde P(e_1(z))$ must be equal.  Since
$\widetilde P(e_1(z))\in W\subset Y_\infty^n$, both $Az$ and $Bz$
lie in~$Y$.

For any $j$, choosing $\sigma$ with $\sigma(1)=j$ gives
$e_j(z)=U_\sigma e_1(z)$ and
$\widetilde P(e_j(z))=U_\sigma\widetilde P(e_1(z))$, so
$\widetilde P(e_j(z))$ has $Az$ in the $j$-th coordinate and $Bz$ in
every other.  By linearity, for $z=(z_1,\dots,z_n)$ and each $i$,
$(\widetilde Pz)_i=Az_i+B\sum_{j\ne i}z_j$.
Since $\widetilde Pz\in W$, the sum of its coordinates vanishes:
$(A+(n-1)B)\sum_i z_i=0$ for all $z$, hence $A+(n-1)B=0$.  On the
other hand, $e_1(y)-e_2(y)\in W$ for $y\in Y$, and $\widetilde P$
fixes~$W$, so looking at the first coordinate gives $Ay-By=y$.

Setting $R:=A-B\colon Z\to Y$, we have $R\in\mathcal P(Z,Y)$.  Solving
$A+(n-1)B=0$ and $A-B=R$ yields $A=\frac{n-1}{n}R$ and
$B=-\frac{1}{n}R$, whence
$(\widetilde Pz)_i = R(z_i-\frac{1}{n}\sum_j z_j)$,
i.e.\ $\widetilde P=\widehat R\circ S_n^Z$, where
$\widehat R(z_1,\dots,z_n):=(Rz_1,\dots,Rz_n)$ satisfies
$\|\widehat R\|=\|R\|$.  By Lemma~\ref{lem:Sn},
$\|\widetilde P\|\le\|R\|\,\mu_n$.

We claim equality holds.  Fix $\eta>0$ and choose $u\in Z$ with
$\|u\|=1$ and $\|Ru\|>\|R\|-\eta$.  For $x:=(u,-u,\dots,-u)$, the
proof of Lemma~\ref{lem:Sn} gives
$S_n^Zx=(\mu_n u,-\tfrac{2}{n}u,\dots,-\tfrac{2}{n}u)$, hence
$\|\widetilde P\|\ge\|\widetilde Px\|_\infty=\mu_n\|Ru\|
>\mu_n(\|R\|-\eta)$.
Letting $\eta\downarrow 0$ yields $\|\widetilde P\|=\mu_n\|R\|$.

Since $R\in\mathcal P(Z,Y)$ and the infimum $\lambda(Y,Z)=a$ is not
attained, $\|R\|>a$.  Therefore
$\|P\|\ge\|\widetilde P\|=\mu_n\|R\|>\mu_n a$.
Because $P$ was arbitrary, every projection from $Z_\infty^n$ onto $W$
has norm strictly bigger than $\mu_n a$.  Combined with the upper bound,
$\lambda(W,Z_\infty^n)=\mu_n a$ and the infimum is not attained.
\end{proof}

\section{Proof of Theorem~A}\label{sec:main_proof}

The case $\lambda\in(1,2]$ is treated in \cite{KaniaLewicki}; we invoke
the result we need.

\begin{theorem}[{\cite[Theorem~A]{KaniaLewicki}}]%
\label{thm:KL}
For every $\alpha\in(1,2]$ there exists a closed subspace
$Y_0\subset\ell_\infty$ with $\lambda(Y_0,\ell_\infty)=\alpha$ such that the
infimum defining the projection constant is not attained.
\end{theorem}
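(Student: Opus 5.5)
This is \cite[Theorem~A]{KaniaLewicki}, used here as a black box. If I had to prove it directly, I would work with hyperplanes of $\ell_\infty$, since their projection constants are computable. The formulation here asks for a \emph{subspace} of $\ell_\infty$, whereas the companion paper speaks of a renorming. So the first thing I would check is that the examples there are built as $Y_0=\ker f$ for some $f\in\ell_\infty^*$, which is a closed subspace of $\ell_\infty$ with exactly the required properties.

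\emph{Setting up the norm of a projection.} Every projection of $\ell_\infty$ onto $\ker f$ has the form $P_z=\id-f(\cdot)z$ with $z\in\ell_\infty$ and $f(z)=1$. Hence
\[
\lambda(\ker f,\ell_\infty)=\inf_{f(z)=1}\|P_z\|.
\]
Write $f=g+\varphi$ with $g\in\ell_1$ and $\varphi$ vanishing on $c_0$. The $i$-th coordinate of $P_zx$ is $(\delta_i-z_if)(x)$, so
\[
\|P_z\|=\sup_i\bigl(|1-z_ig_i|+|z_i|(\|f\|-|g_i|)\bigr),
\]
using $\|\delta_i-z_if\|=|1-z_ig_i|+|z_i|\,\|f-g_ie_i^*\|$. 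This turns the projection constant into an explicit extremal problem over sequences $z$ with $g(z)+\varphi(z)=1$. Formulas of this type are due to Baronti--Franchetti \cite{BarontiF} and Blatter--Cheney \cite{BlatterCheney}.

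\emph{Choosing $f$ and computing the infimum.} For $\alpha=2$ I would take $f=\varphi$ purely singular, say a Banach limit, following Isbell--Semadeni. Then $g=0$ and $\|P_z\|=\sup_i(1+|z_i|)$. Since $\varphi(z)=1$ forces $\limsup|z_i|\ge 1$, we get $\|P_z\|\ge 2$ always. Constant sequences $z$ approach the bound $2$, and I would then check whether some $z$ actually attains it.

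For $\alpha\in(1,2)$ I would mix a finitely supported or $\ell_1$ part $g$ with a singular part $\varphi$, weighted by a parameter $t$. The plan is:
\begin{enumerate}[label=(\roman*)]
\item solve the extremal problem explicitly in $t$ and show the value depends continuously on $t$;
\item show that the value runs over the whole interval $(1,2)$ as $t$ varies.
\end{enumerate}

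\emph{Non-attainment, the main obstacle.} I expect the hard part to be non-attainment. The idea is that the singular part only sees the behaviour of $z$ at infinity, while the supremum over $i$ does see every coordinate. Near-optimal $z$ must have $|z_i|$ approaching a critical value along a non-principal ultrafilter, and the coordinate-wise constraint can only be met in the limit and never exactly.

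To prove this, I would argue by contradiction. Given a $z$ with $\|P_z\|=\alpha$, extract the coordinates where the defining inequality is tight. Then use the fact that $\varphi$ annihilates $c_0$ to modify $z$ on a cofinite set in a way that is incompatible with $f(z)=1$; equivalently, show that the singular functional would have to attain its norm on a vector where it cannot. The delicate point is that this argument only works in the regime where $\varphi$ genuinely contributes, that is, for $t$ strictly inside the parameter range, and the endpoint $\alpha=2$ needs the separate Isbell--Semadeni-style argument above.
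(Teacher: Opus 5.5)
Your framework is the route the paper attributes to \cite{KaniaLewicki}; the paper itself only cites Theorem~\ref{thm:KL} and does not reprove it. You use hyperplanes $\ker f\subset\ell_\infty$, projections $P_z=\id-f(\cdot)z$ with $f(z)=1$, and the Blatter--Cheney/Baronti--Franchetti type formula for $\|P_z\|$, which you derive correctly. The gap is non-attainment, and at $\alpha=2$ your concrete choice fails outright. If $\varphi$ is a Banach limit, then $z=\mathbf 1$ satisfies $\varphi(z)=1$ and $\|P_z\|=\sup_i(1+|z_i|)=2$. So the infimum is attained and $\ker\varphi$ is $2$-injective. More generally, for any singular $\varphi\neq0$ your formula reads $\|P_z\|=1+\|\varphi\|\,\|z\|_\infty$. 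Hence the value $2$ is attained exactly when $\varphi$ attains its norm. The heuristic that the singular part ``only sees infinity'', so that the constraint can be met only in the limit, is therefore false. Singularity alone gives nothing, and the contradiction argument you sketch has nothing to contradict unless $\varphi$ has some further property.

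The missing ingredient is to choose $\varphi\in c_0^\perp$ that does \emph{not} attain its norm, and to show such functionals exist. This is presumably the role of the ``norm-attainment properties of singular functionals'' mentioned in the introduction. For example, take $\psi\in(\ell_\infty/c_0)^*$ not attaining its norm and put $\varphi:=\psi\circ q$. Such a $\psi$ exists by James' theorem, or explicitly: take a point of $\beta\N\setminus\N$ in the closure of a countable discrete set of other points of $\beta\N\setminus\N$, and combine point masses at the point and at the set with opposite signs. The same criterion governs $(1,2)$, and with it your plan becomes an actual computation. Suppose $g$ is supported on a finite set $F$ with $|g_i|<1/2$, $\varphi\neq0$, and $\|g\|_1+\|\varphi\|=1$. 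Then $\|P_z\|\le\lambda$ forces $|z_i|\le(\lambda-1)/(1-2|g_i|)$ on $F$ and $|z_i|\le\lambda-1$ off $F$. Since $\varphi(z)$ depends only on $z$ off $F$, one gets
\[
\lambda(\ker f,\ell_\infty)=1+\Bigl(\sum_{i\in F}\frac{|g_i|}{1-2|g_i|}+\|\varphi\|\Bigr)^{-1},
\]
and the infimum is attained if and only if $\varphi$ attains its norm. Taking, say, $g=(s/2,s/2,0,\dots)$ and $\|\varphi\|=1-s$ with $s\in[0,1)$ sweeps out all of $(1,2]$, so no separate endpoint argument is needed. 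With a norm-attaining singular part such as a Banach limit, every example in your family has an attained projection constant. As written, the proposal therefore does not establish the statement.
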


\begin{theorem}\label{thm:main_real}
For every $\lambda>2$ there exists a Banach space that is
$(\lambda^+)$-injective but not $\lambda$-injective.
\end{theorem}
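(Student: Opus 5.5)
Fix $\lambda>2$. The plan is to write $\lambda=\mu_n^m\alpha$ with $\alpha\in(1,2]$ and integers $n\ge2$, $m\ge1$. We start from the non-attained subspace $Y_0\subset\ell_\infty$ of Theorem~\ref{thm:KL} and apply Lemma~\ref{lem:multiplication} $m$ times. The simplest choice is to fix $n=4$, so that $\mu_4=3/2$. Since $\lambda>2$, the quantity $\log_{3/2}(\lambda/2)$ is positive, and we let $m\ge1$ be the least integer with $(3/2)^m\ge\lambda/2$. Then set $\alpha:=\lambda/(3/2)^m\le 2$. Minimality of $m$ gives $(3/2)^{m-1}<\lambda/2$, hence $\alpha>2\cdot(2/3)=4/3>1$. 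Thus $\alpha\in(1,2]$, and Theorem~\ref{thm:KL} supplies $Y_0\subset Z_0:=\ell_\infty$ with $\lambda(Y_0,Z_0)=\alpha$, the infimum not being attained.

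Next we define recursively $Z_{k}:=(Z_{k-1})_\infty^n$ and $Y_k:=\Sigma_n(Y_{k-1})\subset Z_k$ for $k=1,\dots,m$. By induction on $k$, Lemma~\ref{lem:multiplication} gives $\lambda(Y_k,Z_k)=\mu_n^k\alpha$ with the infimum not attained. The hypotheses of the lemma are exactly the conclusion of the previous step, namely a finite relative projection constant that is not attained, so the induction is immediate. At $k=m$ we obtain $\lambda(Y_m,Z_m)=\lambda$, again not attained.

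Finally, $Z_m$ is a finite $\ell_\infty$-sum of $n^m$ copies of $\ell_\infty$. It is therefore $1$-injective by Lemma~\ref{lem:sum-injective}, since $\ell_\infty$ is $1$-injective (Hahn--Banach coordinatewise). We identify $(Z_{k-1})_\infty^n$ with the $n$-fold sum, which is harmless since the bracketing of iterated $\ell_\infty$-sums is isometric. Lemma~\ref{lem:from-proj-to-inj} then shows that $Y_m$ is $(\lambda^+)$-injective but not $\lambda$-injective. As a remark, $Z_m$ is isometric to $\ell_\infty$, so $Y_m$ even embeds in $\ell_\infty$.

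There is no serious obstacle; the only point needing care is the arithmetic ensuring $\alpha$ lands in $(1,2]$, which the choice $n=4$ with $m$ minimal handles. If one preferred other $n$, one would need $\mu_n\le 2$ and $\mu_n^{m-1}<\lambda/2\le\mu_n^m$. Both hold for any $n\ge3$, since then $\mu_n\in(1,2)$. For $n=2$, $\mu_2=1$ is useless, which is why $n\ge3$ is required.
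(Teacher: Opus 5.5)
Your proof is correct and follows the paper's argument: the base case comes from Theorem~\ref{thm:KL}, you iterate Lemma~\ref{lem:multiplication} $m$ times, obtain $1$-injectivity of $Z_m$ from Lemma~\ref{lem:sum-injective}, and conclude with Lemma~\ref{lem:from-proj-to-inj}. The only difference is the choice of parameters. You fix $n=4$ and take $m$ minimal with $(3/2)^m\ge\lambda/2$, so $\alpha\in(4/3,2]$ and possibly $\alpha=2$, which Theorem~\ref{thm:KL} as stated covers. The paper instead fixes $m$ with $2^m\le\lambda<2^{m+1}$ and then takes $N$ large, which places $\alpha$ strictly inside $(1,2)$.
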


\begin{proof}
Fix $\lambda>2$.  Choose an integer $m\ge 1$ with
$2^m\le\lambda<2^{m+1}$, and then an integer $N\ge 3$ large enough that
\[
\mu_N^m>\frac{\lambda}{2},
\qquad\text{where}\quad \mu_N:=2-\frac{2}{N}.
\]
This is possible because $\mu_N\uparrow 2$ as $N\to\infty$, hence
$\mu_N^m\uparrow 2^m\ge\lambda>\lambda/2$.  Since $\mu_N<2$, we also
have $\mu_N^m<2^m\le\lambda$, so
$\alpha:=\lambda/\mu_N^m\in(1,2]$.
By Theorem~\ref{thm:KL}, there exists a closed subspace
$Y_0\subset Z_0:=\ell_\infty$ with $\lambda(Y_0,Z_0)=\alpha$ and
non-attainment.

Define recursively, for $k=1,\dots,m$,
\[
Z_k:=(Z_{k-1})_\infty^N,\qquad
Y_k:=\Sigma_N(Y_{k-1})\subset Z_k.
\]
By Lemma~\ref{lem:multiplication}, for each $k$,
$\lambda(Y_k,Z_k)=\mu_N\cdot\lambda(Y_{k-1},Z_{k-1})$,
and non-attainment is preserved.  Inductively,
$\lambda(Y_m,Z_m)=\mu_N^m\,\alpha=\lambda$,
and the infimum is not attained.

It remains to note that $Z_m$ is $1$-injective: $Z_0=\ell_\infty$ is
$1$-injective, and Lemma~\ref{lem:sum-injective} shows recursively that
each $Z_k$ is $1$-injective.
Lemma~\ref{lem:from-proj-to-inj}, applied to $Y_m\subset Z_m$, yields
that $Y_m$ is $(\lambda^+)$-injective but not $\lambda$-injective.
\end{proof}

\begin{remark}\label{rem:ambient_linf}
The ambient space $Z_m=(\ell_\infty)_\infty^{N^m}$ is isometrically
isomorphic to $\ell_\infty$.  Indeed, partition $\N$ into $N^m$
infinite sets $A_1,\dots,A_{N^m}$ and fix bijections
$\phi_j\colon \N\to A_j$.  Then
\[
J(x^{(1)},\dots,x^{(N^m)})(\phi_j(k)):=x^{(j)}_k
\qquad (j=1,\dots,N^m,\ k\in\N)
\]
defines a surjective isometry
$J\colon (\ell_\infty)_\infty^{N^m}\to \ell_\infty$.
Hence $Y_m$ may be viewed as a closed subspace of~$\ell_\infty$.
\end{remark}

\begin{remark}\label{rem:thm_A_summary}
Combining Theorem~\ref{thm:main_real} with
\cite[Theorem~A]{KaniaLewicki}, which treats $\lambda\in(1,2]$ via
the hyperplane construction, and the classical result of
Isbell--Semadeni \cite{IsbellSemadeni} at $\lambda=2$, we obtain
Theorem~A.  The proof is completely explicit once
Theorem~\ref{thm:KL} is taken as input: the only operation used to
pass from $(1,2]$ to arbitrary $\lambda>2$ is the zero-sum
construction $Y\mapsto\Sigma_N(Y)$.
\end{remark}

\section{An optimised quantitative decomposition: proof of Theorem~B}%
\label{sec:optimised}

\begin{theorem}\label{thm:optimised}
Let $X$ and $Y$ be Banach spaces such that
\begin{enumerate}[label=\textup{(\roman*)}]
\item there exist $1$-complemented subspaces $X'\subseteq X$ and
$Y'\subseteq Y$ with $X$ isometric to $Y'$ and $Y$ isometric to $X'$;
\item $X$ is isometric to $X\oplus_\infty X$ and $Y$ is isometric
to $Y\oplus_\infty Y$.
\end{enumerate}
Then $\dist(X,Y)\le 9+6\sqrt{3}$.
\end{theorem}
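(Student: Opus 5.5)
The plan is to run a quantitative version of Pełczyński's decomposition method, keeping track of the Banach--Mazur constants at every step and optimising free weights at the end. Let $P\colon X\to X'$ be a norm-one projection with $X'$ isometric to $Y$, and let $A:=\ker P$. Similarly, let $Q\colon Y\to Y'$ be a norm-one projection with $Y'$ isometric to $X$, and let $B:=\ker Q$.

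\emph{Step 1: weighted splitting.} For a weight $t>0$, consider
\[
T_t\colon X\to X'\oplus_\infty A,\qquad T_t x:=\bigl(Px,\;t(x-Px)\bigr).
\]
Since $\|P\|=1$ and $\|I-P\|\le 2$, we get $\|T_t\|\le\max(1,2t)$. The inverse is $(u,v)\mapsto u+v/t$, with norm at most $1+1/t$. This yields
\[
\dist\bigl(X,\,Y\oplus_\infty A\bigr)\le \max(1,2t)\,(1+1/t).
\]
The symmetric estimate $\dist(Y,\,X\oplus_\infty B)\le\max(1,2s)(1+1/s)$ holds with its own weight $s$. I will keep $s,t$ free rather than fixing $t=1/2$ (which gives $3$), because the later composition pays for the weights asymmetrically.

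\emph{Step 2: absorption using squareness.} Hypothesis (ii) gives isometries $Y\cong Y\oplus_\infty Y$ and $X\cong X\oplus_\infty X$. I would chain
\[
X\;\approx\; Y\oplus_\infty A\;\cong\;Y\oplus_\infty Y\oplus_\infty A\;\approx\;Y\oplus_\infty X,
\]
and symmetrically $Y\approx X\oplus_\infty B\cong X\oplus_\infty X\oplus_\infty B\approx X\oplus_\infty Y$. Since $X\oplus_\infty Y$ and $Y\oplus_\infty X$ are isometric, this gives $\dist(X,Y)$.

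Composing the steps naively multiplies four splitting constants, or two constants of the form $(3+\sqrt3)$, giving $(3+\sqrt3)^2=12+6\sqrt3$; that is too large. The way out is to avoid paying for splitting and re-gluing separately. The middle step $Y\oplus_\infty A\approx Y\oplus_\infty X$ is $\mathrm{id}_Y\oplus T_t^{-1}$ applied only on the $Y\oplus A$ block, and its cost can be merged with the first splitting. So I would write a single explicit operator
\[
\Phi\colon X\to Y\oplus_\infty X
\]
built from $P$ and $I-P$ and the isometries $J_Y\colon Y\oplus_\infty Y\to Y$, $J_X\colon X\oplus_\infty X\to X$. Roughly, $\Phi x=\bigl(\text{first half of } Px,\;(\text{second half of }Px)+c\,(x-Px)\bigr)$, with one scalar weight $c$ to be chosen. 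I would then estimate $\|\Phi\|$ and $\|\Phi^{-1}\|$ directly by the triangle inequality. I would do the same for $\Psi\colon Y\to X\oplus_\infty Y$ with weight $d$, and consider the composite $\Psi^{-1}\circ(\text{swap})\circ\Phi$.

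\emph{Main obstacle.} I expect the hardest step to be the bookkeeping of the composite operator. The goal is to bound its norm and the norm of its inverse as explicit functions $f(c,d)$, $g(c,d)$ that are sharper than the product of the individual distortions. The product $f\,g$ then has to be minimised; I expect the optimum to occur at a weight involving $\sqrt3$, giving exactly $9+6\sqrt3$. My first attempt would use the symmetric choice $c=d$ and reduce to a one-variable rational function, with the minimum found by calculus. If the symmetric choice does not reach $9+6\sqrt3$, I would let the two weights differ and also reweight the two $\oplus_\infty$ coordinates in the intermediate space before composing.
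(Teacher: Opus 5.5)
The architecture you describe is right, but the proposal stops exactly where the theorem's content begins. The qualitative isomorphism $X\simeq Y$ is just Pe{\l}czy\'nski's decomposition; what the statement asserts is the constant $9+6\sqrt3$. You never fix the weights, never estimate the composite $\Psi^{-1}\circ\mathrm{swap}\circ\Phi$, and only \emph{expect} the optimum to be $9+6\sqrt3$.

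Moreover, the concrete first attempt you describe cannot reach that constant. Write, as in the paper, $\eta\colon Y\to X'$ and $\theta\colon X\to Y'$ for the isometries, $R$ for your $Q$, and $\psi=(\psi_1,\psi_2)$, $\varphi=(\varphi_1,\varphi_2)$ for the square isometries. Take your $\Phi x=(\psi_1\eta^{-1}Px,\ \eta\psi_2\eta^{-1}Px+c(x-Px))$ and the analogous $\Psi$ with weight $d$. The triangle-inequality bounds you plan to use give $\|W\|\le 1+2c+2/d$ and $\|W^{-1}\|\le 1+2d+2/c$ for $W=\Psi^{-1}\circ\mathrm{swap}\circ\Phi$. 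By Cauchy--Schwarz,
\[
(1+2c+2/d)(1+2/c+2d)\ge(1+2+2)^2=25>9+6\sqrt3
\]
for all $c,d>0$, symmetric or not. So letting $c\ne d$ does not help, and all the real work sits in your vague fallback.

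The missing idea is to weight the two halves $\psi_1\eta^{-1}Px$ and $\psi_2\eta^{-1}Px$ unequally, reglue through $\varphi^{-1}$ with unequal weights, and then balance the two slots of the maximum. Concretely, for $a>0$ and $\nu:=\sqrt{2a+1}/a$ take
\[
\Phi x=\bigl(\nu\,\psi_1\eta^{-1}Px,\ \tfrac{1}{a\nu}\,\eta\psi_2\eta^{-1}Px+\tfrac1\nu(x-Px)\bigr),\qquad
\Psi y=\bigl(\varphi_2\theta^{-1}Ry,\ \tfrac1a\,\theta\varphi_1\theta^{-1}Ry+(y-Ry)\bigr).
\]
Then $\Psi^{-1}\circ\mathrm{swap}\circ\Phi$ is exactly the paper's operator $W_a=U_aST_a$, namely
\[
W_ax=\theta\varphi^{-1}\Bigl(a\nu\,\theta^{-1}R\psi_1\eta^{-1}Px,\ \tfrac1\nu\bigl(\tfrac1a\,\eta\psi_2\eta^{-1}Px+x-Px\bigr)\Bigr)+\nu(I-R)\psi_1\eta^{-1}Px .
\]
The interchange of $\varphi_1,\varphi_2$ in $\Psi$ is cosmetic. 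The triangle inequality gives
\[
\|W_a\|,\ \|W_a^{-1}\|\le\max\{a\nu,\tfrac1\nu(\tfrac1a+2)\}+2\nu .
\]
The choice of $\nu$ is precisely what makes both entries of the maximum equal to $\sqrt{2a+1}$, so both norms are at most $(a+2)\sqrt{2a+1}/a$. Squaring gives $g(a)=2a+9+12/a+4/a^2$. The equation $g'(a)=0$ reduces to $a^3-6a-4=(a+2)(a^2-2a-2)=0$, whose positive root $a=1+\sqrt3$ gives $g=9+6\sqrt3$. This balancing and optimisation is the proof; without it your argument only yields a bound of about $25$.
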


\begin{proof}
Fix norm-one projections $P\colon X\to X'$ and $R\colon Y\to Y'$, and
set $E:=\ker P$ and $F:=\ker R$.  By~(i), fix surjective isometries
$\theta\colon X\to Y'$ and $\eta\colon Y\to X'$.  By~(ii), fix
surjective isometries
$\varphi=(\varphi_1,\varphi_2)\colon X\to X\oplus_\infty X$ and
$\psi=(\psi_1,\psi_2)\colon Y\to Y\oplus_\infty Y$.

Let $a>0$ and put
\[
\mu:=\frac{1}{a},\qquad
\nu:=\frac{\sqrt{2a+1}}{a},\qquad
b:=\frac{a}{\sqrt{2a+1}}=\frac{1}{\nu}.
\]
Define three operators:
\begin{align*}
T_a\colon X&\to Y\oplus_\infty Y\oplus_\infty E,\\
T_a x&:=\bigl(\nu\,\psi_1\eta^{-1}Px,\;
\mu\,\psi_2\eta^{-1}Px,\; x-Px\bigr);\\[2mm]
S\colon Y\oplus_\infty Y\oplus_\infty E
&\to X\oplus_\infty X\oplus_\infty F,\\
S(y_1,y_2,e)&:=\bigl(\theta^{-1}Ry_1,\;
\eta y_2+e,\; y_1-Ry_1\bigr);\\[2mm]
U_a\colon X\oplus_\infty X\oplus_\infty F&\to Y,\\
U_a(x_1,x_2,f)&:=\theta\,\varphi^{-1}(ax_1,\,bx_2)+f.
\end{align*}
Each is bijective, with inverses:
\begin{align*}
T_a^{-1}(y_1,y_2,e)
&=\eta\,\psi^{-1}\!\bigl(\tfrac{1}{\nu}y_1,\,ay_2\bigr)+e,\\
S^{-1}(x_1,x_2,f)
&=\bigl(\theta x_1+f,\;\eta^{-1}Px_2,\;x_2-Px_2\bigr),\\
U_a^{-1}y
&=\bigl(\tfrac{1}{a}\,\varphi_1\theta^{-1}Ry,\;
\nu\,\varphi_2\theta^{-1}Ry,\;y-Ry\bigr).
\end{align*}
Set $W_a:=U_aST_a\colon X\to Y$.

\smallskip
\noindent\emph{Estimating $\|W_a\|$.}
Let $\|x\|\le 1$.  Since $\|Px\|\le 1$ and $\|x-Px\|\le 2$,
and $\|\psi_1\eta^{-1}Px\|\le 1$, while
$\|(I_Y-R)\psi_1\eta^{-1}Px\|\le 2$, a direct expansion yields
$\|W_ax\|\le\max\{a\nu,\,b(\mu+2)\}+2\nu$.
By the choice of parameters, $a\nu=\sqrt{2a+1}$ and
$b(\mu+2)=\sqrt{2a+1}$, so
$\|W_a\|\le 2\nu+\sqrt{2a+1}$.

\smallskip
\noindent\emph{Estimating $\|W_a\|$.}
Let $\|x\|\le 1$.  Writing out the composition, we obtain
\[
W_ax
=
\theta\varphi^{-1}\!\Bigl(
a\nu\,\theta^{-1}R\psi_1\eta^{-1}Px,\;
b\bigl(\mu\,\eta\psi_2\eta^{-1}Px+x-Px\bigr)
\Bigr)
+
\nu(I_Y-R)\psi_1\eta^{-1}Px.
\]
Hence
\begin{align*}
\|W_ax\|
&\le
\max\Bigl\{
a\nu\|R\psi_1\eta^{-1}Px\|,
\,b\bigl(\mu\|\psi_2\eta^{-1}Px\|+\|x-Px\|\bigr)
\Bigr\}
+\nu\|(I_Y-R)\psi_1\eta^{-1}Px\| \\
&\le \max\{a\nu,\,b(\mu+2)\}+2\nu,
\end{align*}
because $\|Px\|\le 1$, $\|x-Px\|\le 2$, $\|R\|=1$, and
$\|(I_Y-R)\|\le 1+\|R\|=2$.
By the choice of parameters,
\[
a\nu=\sqrt{2a+1}
\qquad\text{and}\qquad
b(\mu+2)=\sqrt{2a+1},
\]
so
\[
\|W_a\|\le 2\nu+\sqrt{2a+1}.
\]

\smallskip
\noindent\emph{Estimating $\|W_a^{-1}\|$.}
Let $\|y\|\le 1$.  Since $W_a^{-1}=T_a^{-1}S^{-1}U_a^{-1}$, we have
\[
U_a^{-1}y
=
\bigl(
\mu\,\varphi_1\theta^{-1}Ry,\;
\nu\,\varphi_2\theta^{-1}Ry,\;
y-Ry
\bigr),
\]
and therefore
\[
S^{-1}U_a^{-1}y
=
\bigl(
\mu\,\theta\varphi_1\theta^{-1}Ry+y-Ry,\;
\nu\,\eta^{-1}P\varphi_2\theta^{-1}Ry,\;
\nu(I_X-P)\varphi_2\theta^{-1}Ry
\bigr).
\]
Applying $T_a^{-1}$ gives
\[
W_a^{-1}y
=
\eta\psi^{-1}\!\Bigl(
\frac{1}{\nu}\bigl(\mu\,\theta\varphi_1\theta^{-1}Ry+y-Ry\bigr),\;
a\nu\,\eta^{-1}P\varphi_2\theta^{-1}Ry
\Bigr)
+
\nu(I_X-P)\varphi_2\theta^{-1}Ry.
\]
Consequently,
\begin{align*}
\|W_a^{-1}y\|
&\le
\max\Bigl\{
\frac{1}{\nu}(\mu+2),\;
a\nu
\Bigr\}
+2\nu \\
&=
\max\{b(\mu+2),\,a\nu\}+2\nu
=
2\nu+\sqrt{2a+1}.
\end{align*}
Thus $\|W_a^{-1}\|\le 2\nu+\sqrt{2a+1}$.

\smallskip
Setting $K(a):=2\nu+\sqrt{2a+1}=\frac{(a+2)}{a}\sqrt{2a+1}$,
we obtain $\dist(X,Y)\le K(a)^2=g(a)$ where
\[
g(a):=2a+9+\frac{12}{a}+\frac{4}{a^2}.
\]
Solving $g'(a)=0$ yields $a^3-6a-4=0$, whose unique positive root is
$a=1+\sqrt{3}$.  Substituting gives $g(1+\sqrt{3})=9+6\sqrt{3}$.
\end{proof}

\begin{corollary}\label{cor:linfty-ellinfty}
$\dist(L_\infty[0,1],\ell_\infty)\le 9+6\sqrt{3}$.
\end{corollary}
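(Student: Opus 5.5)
We will deduce the corollary directly from Theorem~\ref{thm:optimised} (equivalently Theorem~B), taking $X:=L_\infty[0,1]$ and $Y:=\ell_\infty$. The work is to verify the two hypotheses (i) and (ii) for this pair.

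\emph{Isometric squareness.} For $\ell_\infty$ this is exactly the argument of Remark~\ref{rem:ambient_linf} with two pieces. Split $\N$ into two infinite sets $A_1,A_2$ and fix bijections $\phi_j\colon\N\to A_j$; this gives a surjective isometry $\ell_\infty\oplus_\infty\ell_\infty\to\ell_\infty$. For $L_\infty[0,1]$, split $[0,1]=[0,\tfrac12)\cup[\tfrac12,1]$. Restriction gives an isometry of $L_\infty[0,1]$ onto $L_\infty[0,\tfrac12)\oplus_\infty L_\infty[\tfrac12,1]$. The affine maps $t\mapsto t/2$ and $t\mapsto(t+1)/2$ preserve null sets, so composition with them identifies each summand isometrically with $L_\infty[0,1]$.

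\emph{Mutual $1$-complementation.} To embed $\ell_\infty$ into $L_\infty[0,1]$, take disjoint intervals $I_k=[2^{-k},2^{-k+1})$ and map $(c_k)\mapsto\sum_k c_k\mathbf 1_{I_k}$; this is an isometry. The conditional expectation
\[
f\mapsto\sum_k\Bigl(|I_k|^{-1}\int_{I_k}f\Bigr)\mathbf 1_{I_k}
\]
is a norm-one projection onto its range, since each average is bounded by $\|f\|_\infty$.

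To embed $L_\infty[0,1]$ into $\ell_\infty$, we argue abstractly rather than through a measure-preserving map. $L_\infty[0,1]$ has density at most $\mathfrak c$, so it is isometric to a subspace of $\ell_\infty(\Gamma)$ with $|\Gamma|=\mathfrak c$. This is not yet $\ell_\infty$, so instead we use separating functionals: choose a countable norming set $(\varphi_k)$ in the unit ball of $L_\infty[0,1]^*$. For instance, take $f\mapsto |B|^{-1}\int_B f$ over rational intervals $B$; these norm $L_\infty$ by the Lebesgue differentiation theorem. The map $J f:=(\varphi_k(f))_k$ is then a linear isometry into $\ell_\infty$.

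Since $L_\infty[0,1]$ is $1$-injective (by Kelley, as an abelian von Neumann algebra or $C(K)$ with $K$ extremally disconnected), the operator $J^{-1}\colon J(L_\infty)\to L_\infty$ extends to a norm-one $Q\colon\ell_\infty\to L_\infty$. Then $JQ$ is a norm-one projection onto $J(L_\infty)$.

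\emph{Main obstacle.} The only delicate point is this last step: making sure that $1$-injectivity of $L_\infty[0,1]$ and a countable norming family can be cited cleanly. Everything else is routine. With (i) and (ii) verified, Theorem~\ref{thm:optimised} gives $\dist(L_\infty[0,1],\ell_\infty)\le 9+6\sqrt3$.
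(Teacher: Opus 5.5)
Your proof is correct and follows the paper's route. You apply Theorem~\ref{thm:optimised} with $X=L_\infty[0,1]$ and $Y=\ell_\infty$, and verify its hypotheses through isometric squareness, the disjoint-interval copy of $\ell_\infty$ in $L_\infty[0,1]$, an isometric copy of $L_\infty[0,1]$ in $\ell_\infty$ built from a countable norming family (the concrete content of weak$^*$-separability of the dual ball), and $1$-injectivity to get the contractive projections. The differences are that you check directly, via the conditional expectation and rational-interval averages with Lebesgue differentiation, what the paper cites from Korpalski--Plebanek, and you correctly derive the $1$-complementation from the $1$-injectivity of the embedded copy rather than of the ambient space.
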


\begin{proof}
We verify the hypotheses of Theorem~\ref{thm:optimised}.
As noted in the proof of \cite[Corollary~5.2]{KorpalskiPlebanek},
both $L_\infty[0,1]$ and $\ell_\infty$ are $1$-injective and
isometrically square.

Moreover, that proof gives isometric embeddings in both directions:
$\ell_\infty$ embeds isometrically into $L_\infty[0,1]$ via disjoint
intervals, while $L_\infty[0,1]$ embeds isometrically into
$\ell_\infty$ because it has weak$^*$-separable dual ball.
Since the ambient spaces are $1$-injective, the images of these
embeddings are $1$-complemented. Thus the hypotheses of Theorem~\ref{thm:optimised} are satisfied, and
therefore
\(
\dist(L_\infty[0,1],\ell_\infty)\le 9+6\sqrt{3}.
\)
\end{proof}

\begin{remark}
The estimate in Theorem~\ref{thm:optimised} is obtained by optimising
an explicit one-parameter family of isomorphisms.  The only input
beyond the complemented embeddings is the square structure of the two
spaces.
\end{remark}

\subsection*{Acknowledgements}
The first-named author gratefully acknowledges support received from
NCN Sonata-Bis~13 (2023/50/E/ST1/00067).


\end{document}